\documentclass[11pt]{amsart}
\usepackage[margin=1.2in]{geometry}
\usepackage{amsmath,amssymb,amsthm}
\usepackage{booktabs}
\usepackage[colorlinks=true,linkcolor=blue,citecolor=blue,urlcolor=blue]{hyperref}

\newtheorem{theorem}{Theorem}[section]
\newtheorem{proposition}[theorem]{Proposition}
\newtheorem{lemma}[theorem]{Lemma}
\newtheorem{corollary}[theorem]{Corollary}
\theoremstyle{definition}
\newtheorem{problem}[theorem]{Problem}
\newtheorem{remark}[theorem]{Remark}

\newcommand{\F}{\mathbb{F}}
\newcommand{\Z}{\mathbb{Z}}
\newcommand{\Q}{\mathbb{Q}}
\newcommand{\R}{\mathbb{R}}
\newcommand{\C}{\mathbb{C}}
\renewcommand{\P}{\mathsf{P}}
\newcommand{\Ball}[1]{B(#1)}
\newcommand{\supp}{\operatorname{supp}}
\newcommand{\eps}{\varepsilon}
\newcommand{\zceight}{\zeta_8}

\title[Higman in balls]{Higman in balls: the mod-2 dichotomy for integral
units of the Promislow group}
\author{Moe Tabei}
\address{Independent researcher, Japan}
\email{tabei@ryun.jp}

\begin{document}

\begin{abstract}
Higman's conjecture that $\Z[G]$ has only trivial units, for $G$ torsion-free,
is open for the Promislow (Hantzsche--Wendt) group $\P$, the group over which
Gardam disproved the field-coefficient unit conjecture in 2021. We introduce an
exact reduction of the integral conjecture for $\P$ modulo $2$ into two
sub-problems, record the base cases as consequences of the Craven--Pappas
small-length theorems, and argue that the frontier of the \emph{integral}
problem is word-radius $4$: a triviality theorem for $\Z[\P]$ there would be
the first statement separating $\Z$ from every field. Throughout, claims are
ball-limited and stated as such; we make no claim on the full conjecture.
\end{abstract}

\maketitle

\section{Introduction}

Let $G$ be a torsion-free group and $K$ a field or $\Z$. A unit $u$ of the
group ring $K[G]$ is \emph{trivial} if $u = \kappa g$ for a unit $\kappa$ of $K$
and $g \in G$. The unit conjecture (attributed to Kaplansky, and for integral
coefficients to Higman \cite{Higman1940}) asserts that all units are trivial.

Over fields the conjecture is now known to fail. Gardam \cite{Gardam2021}
produced a nontrivial unit of $\F_2[\P]$, where $\P$ is the Promislow
(Hantzsche--Wendt) group; Murray \cite{Murray2021} produced nontrivial units of
$\F_p[\P]$ for all primes $p$; and Gardam \cite{GardamComplex2023} produced
nontrivial units of $\C[\P]$ with coefficients in $\Z[\zceight]$. Thus the
conjecture is false over every field of positive characteristic and over every
field containing a primitive eighth root of unity. It remains open over $\Q$
and over $\R$ --- and over $\Z$, the original setting of Higman
\cite{Higman1940}, which is the case this paper is about.

This paper studies the integral conjecture for $\P$ through a single organizing
device: reduction modulo $2$. Every unit of $\Z[\P]$ reduces to a unit of
$\F_2[\P]$, which is either trivial or not; this splits the integral problem
into two exact sub-problems (Section~\ref{sec:dichotomy}). We record that the
base cases fall out of existing structure --- the Craven--Pappas small-length
theorems \cite{CravenPappas2013} (Section~\ref{sec:base}) --- and we argue that
the frontier of the integral problem is word-radius $4$, the smallest radius at
which nontrivial units are known to exist over some field
(Section~\ref{sec:frontier}).

\medskip
\noindent\textbf{Relation to Craven--Pappas.} Both halves of the toolkit used
below are theirs, and we claim no part of either. Craven and Pappas
\cite{CravenPappas2013} prove the length symmetry
$\sigma\tau=1 \Rightarrow L(\sigma)=L(\tau)$ (their Thm.~4.9, with the unit
form Thm.~4.10); they establish the determinant criterion for units
(Thm.~6.8), specialize it to the fours group as their ``Determinant
condition'' (Thm.~8.5) with the accompanying $4\times4$ embedding (Thm.~8.6);
they settle $L$-length at most $3$ (Thms.~10.4--10.6, 11.2); and they already
carry out part of the reduction at $L$-length $4$ (Thm.~13.14). Their \S15
states the programme of the present paper almost verbatim, asking at which
$L$-length $n\ge 4$ a potential counterexample might exist. All theorem numbers
we cite are those of the published version, J.~Algebra \textbf{394} (2013),
310--356; the earlier preprint numbers them differently. We note also that
\cite[Thm.~14.1]{CravenPappas2013} is itself a dichotomy theorem; the
dichotomy of our title is the elementary mod-$2$ case split of
Section~\ref{sec:dichotomy} and is a different statement.

What is not in \cite{CravenPappas2013} is any integral content. They work
throughout over a field, and their primes are primes of the Laurent ring, never
rational primes; there is consequently no mod-$2$-over-$\Z$ statement in their
paper, and the dichotomy of Section~\ref{sec:dichotomy}, which is driven by
$\Z^\times = \{\pm1\}$, has no counterpart there. Their $L$-length is also not
the word radius: it is the length induced by one fixed infinite-dihedral
quotient, and $\{L \le n\}$ is an infinite union of lattice cosets, whereas
$\Ball{r}$ is finite. A radius-limited statement follows from an
$L$-length statement only when the inclusion $\Ball{r}\subseteq\{L\le r\}$ puts
it inside the range their theorems cover --- which is what happens for $r\le3$
(Corollary~\ref{cor:cp}), and what stops helping at $r=4$, where their
$L$-length $4$ analysis is only partial.

We emphasize at the outset that every unconditional statement below is
\emph{ball-limited}: it concerns units whose support lies in a fixed ball
$\Ball{r}$ of the word metric. We prove nothing about the full conjecture, and
we flag the honest obstruction --- a joint bound on support radius and
coefficient height --- in Section~\ref{sec:wall}.

\medskip
\noindent\textbf{Conventions.} $\P = \langle a,b \mid b^{-1}a^2b = a^{-2},\
a^{-1}b^2a = b^{-2}\rangle$ is torsion-free, and is the fundamental group of the
Hantzsche--Wendt flat $3$-manifold; it is virtually $\Z^3$, hence amenable. It
is not a unique-product group \cite{Promislow1988}, and since right-orderable
groups have the unique-product property it is not right-orderable either. We
fix the generating set $\{a,b\}$ and write
$\Ball{r}$ for the ball of word-radius $r$. For $u = \sum_g u_g\, g \in \Z[\P]$,
$\supp(u) = \{g : u_g \neq 0\}$, the \emph{height} of $u$ is
$\max_g |u_g|$, and $\eps\colon \Z[\P]\to\Z$ is the augmentation.

\section{Preliminaries}\label{sec:prelim}

\subsection{The determinant criterion}
This subsection is expository. Neither the embedding nor the criterion is ours;
we record them in the form our computations use, with attributions.

The translation lattice $L = \{(\mathrm{I},t): t \in 2\Z^3\} \cong \Z^3$ is
normal of index $4$ in $\P$, with coset representatives $1, a, b, ab$; the
Laurent variables are $x=a^2$, $y=b^2$, $z=(ab)^2$, which are elements of $L$
and not coset representatives. Thus $\Z[\P]$ is free of rank $4$ as a right
$\Z[L]$-module, and left multiplication by $u \in \Z[\P]$ is a matrix
$M_u \in M_4(\Z[L])$; the assignment $u \mapsto M_u$ is an injective ring
homomorphism.

The $4\times 4$ embedding in exactly this form --- coset representatives
$1,a,b,ab$ and Laurent variables $x=a^2$, $y=b^2$, $z=(ab)^2$ --- is written
out by Passman \cite{Passman2021}, who uses it to compute that the units of
Gardam and of Murray have determinant $1$. His Proposition~1 is that
computation, carried out for those particular units over $\F_d$; it is not
stated as a criterion. The criterion is due to Craven and Pappas: that every
unit of $K[\Gamma]$ has determinant in $K^\times$ is
\cite[Thm.~6.8]{CravenPappas2013}, specialized to the fours group as their
``Determinant condition'' \cite[Thm.~8.5]{CravenPappas2013}, with the embedding
\cite[Thm.~8.6]{CravenPappas2013}. Their form is the sharper one, pinning the
determinant to a scalar rather than to a scalar times a monomial.

Since $\Z[L] \cong \Z[t_1^{\pm},t_2^{\pm},t_3^{\pm}]$ is a
Laurent polynomial ring whose units are exactly $\pm(\text{monomials})$, the
weaker form we verify computationally reads:
\begin{equation}\label{eq:det}
u \in \Z[\P]^\times
\quad\Longleftrightarrow\quad
M_u \in \mathrm{GL}_4(\Z[L])
\quad\Longleftrightarrow\quad
\det(M_u) \in \{\pm\,\text{monomial}\}.
\end{equation}
The same equivalence holds over any field $K$ with $K[L]^\times$ the monomials.
The underlying fact that $\Z[\Z^3]^\times = \{\pm\,\text{monomials}\}$ is
classical. Nothing in \eqref{eq:det} is new here; what is ours is the exact
integer implementation of $u\mapsto\det(M_u)$ and its use in the ball-limited
searches below (Section~\ref{sec:repro}).

\subsection{Small support}
We will use the classical triviality of units of support at most $2$.

\begin{lemma}[Dykema--Heister--Juschenko \cite{DHJ2015}, Thm.~4.2]
\label{lem:twoterm}
Over any field, the group algebra of a torsion-free group has no
\emph{nontrivial} unit whose support has size at most $2$; passing to $\Q$, the
same holds over $\Z$ for units of augmentation in $\Z^{\times}$.
\end{lemma}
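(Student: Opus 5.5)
The plan is to prove the field statement directly and then deduce the integral statement from it. A unit of support size $1$ is trivial by definition, so everything reduces to support size exactly $2$.

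\textbf{The field case.} Let $K$ be a field, $G$ torsion-free, and $u=\alpha g+\beta h$ a unit with $g\neq h$ and $\alpha,\beta\in K^\times$. Multiply on the left by $\alpha^{-1}g^{-1}$; this preserves being a unit and support size. We may then assume $u=1+\lambda t$ with $t=g^{-1}h\neq1$ and $\lambda\in K^\times$. Since $G$ is torsion-free, $t$ has infinite order, so $\langle t\rangle\cong\Z$.

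\textbf{Reduction to a Laurent ring.} Decompose $K[G]=\bigoplus_{c}K[\langle t\rangle]c$ over right coset representatives $c$ of $\langle t\rangle$. Left multiplication by $u$ preserves each summand $K[\langle t\rangle]c$. Write $u^{-1}=\sum_c w_c\,c$ with $w_c\in K[\langle t\rangle]$. Then $uu^{-1}=1$ forces $u\,w_1=1$, where $w_1$ is the component on the coset $\langle t\rangle$ itself. So $1+\lambda t$ is invertible in $K[\langle t\rangle]\cong K[T^{\pm1}]$. Units of a Laurent polynomial ring over a field (a domain, graded by degree) are exactly the nonzero monomials. But $1+\lambda T$ is not a monomial, which is a contradiction. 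Hence there is no nontrivial unit of support $2$.

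\textbf{Passage to $\Z$.} Let $u\in\Z[G]$ be a unit with $|\supp u|\le2$. Its inverse lies in $\Z[G]\subseteq\Q[G]$, so $u$ is a unit of $\Q[G]$. By the field case, $u=qg$ with $q\in\Q^\times$. Then $\eps(u)=q$, and the hypothesis $\eps(u)\in\Z^\times$ gives $q=\pm1$, so $u$ is trivial over $\Z$.

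The augmentation hypothesis is in fact automatic: $\eps$ is a ring map, so $\eps(u)\eps(u^{-1})=1$ in $\Z$. Alternatively, $q\in\Z$ because $u\in\Z[G]$, and the inverse $q^{-1}g^{-1}$ must also have integer coefficients.

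The only step needing care is the reduction to the Laurent ring, namely that invertibility in $K[G]$ descends to $K[\langle t\rangle]$. I would handle it with the coset-component projection argument given above rather than citing \cite{DHJ2015}.
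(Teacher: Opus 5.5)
Your proof is correct. It differs from the paper's in that the paper gives no argument for the field case: it cites \cite[Thm.~4.2]{DHJ2015} and then ``passes to $\Q$'', which is exactly your third paragraph.

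You prove the field case directly, in a few lines, by restricting to the cyclic subgroup generated by $t=g^{-1}h$. The coset projection $K[G]\to K[\langle t\rangle]$ is a $K[\langle t\rangle]$-bimodule map. So an inverse of $1+\lambda t$ in $K[G]$ projects to an inverse in $K[T^{\pm1}]$, where only monomials are units.

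Your route gives self-containedness and some extra reach at no cost:
\begin{itemize}
\item It uses only that the single element $g^{-1}h$ has infinite order, not full torsion-freeness.
\item Because the projection is a bimodule map, it handles a one-sided inverse on either side, so no appeal to direct finiteness is needed.
\item Since $\Z[T^{\pm1}]$ is itself a domain with units $\pm T^k$, you could run the argument directly over $\Z$ (left-multiplying only by $g^{-1}$) without passing through $\Q$.
\end{itemize}

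Your remark that the augmentation hypothesis is automatic, because $\eps(u)\eps(u^{-1})=1$ in $\Z$, is also right. It matches Remark~\ref{rem:tautology}.

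The citation buys brevity. For the only use the paper makes of the lemma, the two-term survivor in the proof of Proposition~\ref{prop:mod6}, your elementary argument is fully sufficient.
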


Triviality of support-$3$ units is, by contrast, open in general; we make no
use of it.

\section{The mod-2 dichotomy}\label{sec:dichotomy}

Let $u$ be a unit of $\Z[\P]$. Since $\eps(u)$ is a unit of $\Z$, after
replacing $u$ by $-u$ if necessary we may assume $\eps(u) = 1$. Reducing modulo
$2$ gives a unit $\bar u$ of $\F_2[\P]$, which is either trivial or not. That
exhausts the possibilities; no classification of the $\F_2$-units is needed for
the case split, and none is available. (What is known about the nontrivial
$\F_2$-units at small radius is recorded in Section~\ref{sec:frontier}.)

\begin{proposition}[mod-2 dichotomy]\label{prop:dichotomy}
Every trivial-unit statement for $\Z[\P]$ is equivalent to the conjunction of:
\begin{itemize}
\item[(I)] every unit $u$ with $\bar u$ \emph{trivial} in $\F_2[\P]$ is
trivial; equivalently, writing $g^{-1}u = 1 + 2w$, one has $w = 0$; and
\item[(II)] no unit of $\Z[\P]$ reduces modulo $2$ to a \emph{nontrivial} unit
of $\F_2[\P]$.
\end{itemize}
\end{proposition}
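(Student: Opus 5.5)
The statement is a bookkeeping equivalence, and the plan is to prove it by a case split on the reduction $\bar u$, treating ``trivial-unit statement'' as any assertion of the form ``every unit $u$ of $\Z[\P]$ in a class $\mathcal{C}$ is trivial''. The classes we need are all units, or those with $\supp(u)\subseteq\Ball{r}$. For ball-limited versions, (I) and (II) are read inside the same class. I will normalize $\eps(u)=1$ throughout, as in the paragraph preceding the statement. This normalization is harmless: $u$ is a unit iff $-u$ is, and $u$ is trivial iff $-u$ is.

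For the forward direction, assume every unit in $\mathcal{C}$ is trivial, so $u=\pm g$. Then $\bar u=g$ is trivial, so (II) holds vacuously: no unit reduces to a nontrivial $\F_2$-unit. For (I), I first check the reformulation. If $\bar u=\bar g$ with $g\in\P$, then $g^{-1}u\equiv 1 \pmod 2$ coefficientwise, so $g^{-1}u=1+2w$ for a unique $w\in\Z[\P]$. If $u$ is trivial, then $u=\pm g'$ and reduction forces $g'=g$. Hence $g^{-1}u=\pm1$, and since $\eps(u)=1$ this gives $g^{-1}u=1$, i.e.\ $w=0$. The same computation gives the converse, $w=0\Rightarrow u=g$. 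So the two formulations in (I) are equivalent.

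For the reverse direction, assume (I) and (II) and let $u\in\mathcal{C}$ be a unit with $\eps(u)=1$. Reduction $\Z[\P]\to\F_2[\P]$ is a ring homomorphism sending $1\mapsto1$, so $\bar u$ is a unit. By the trichotomy-free observation in the text, $\bar u$ is either trivial or nontrivial. If it is nontrivial, (II) is contradicted, so $\bar u$ is trivial, and then (I) gives $u$ trivial.

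The only genuine point needing care, and what I expect to be the main obstacle, is that the ball restriction commutes with the construction. Specifically, $g^{-1}u$ need not lie in the same ball as $u$. So in the ball-limited form I must state (I) as a property of $u$ itself ($u\in\mathcal{C}$ with $\bar u$ trivial implies $u$ trivial), with $w$ only an auxiliary rewriting, rather than quantifying over $w$ in a ball. With that phrasing, every implication above is internal to $\mathcal{C}$, and no further input is needed; in particular, neither Lemma~\ref{lem:twoterm} nor the determinant criterion \eqref{eq:det} is used.
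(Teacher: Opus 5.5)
Your proof is correct and follows the same route as the paper. Both split on whether $\bar u$ is trivial, with (I) handling the trivial-reduction case via $g^{-1}u = 1+2w$ and (II) the nontrivial one. Your version also makes explicit two points the paper's short proof leaves implicit: ``$u$ trivial iff $w=0$'' needs the normalization $\eps(u)=1$ (the unit $u=-g$ gives $w=-1$), and in the ball-limited reading (I) must quantify over $u$ rather than over $w$.
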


\begin{proof}
If $\bar u$ is a trivial unit $\bar g$, then $g^{-1}u \equiv 1 \pmod 2$, so
$g^{-1}u = 1 + 2w$ for some $w \in \Z[\P]$, and $u$ is trivial iff $w=0$. If
$\bar u$ is nontrivial, $u$ is a nontrivial integral lift of an
$\F_2$-counterexample. The two cases exhaust all units, and Higman-for-$\P$ is
exactly the assertion that neither produces a nontrivial unit.
\end{proof}

We call these Case~A and Case~B. Proposition~\ref{prop:dichotomy} is a
definitional case-split, not a theorem of substance; its value is as a
\emph{framework}, isolating the two independent obstacles.

\begin{remark}\label{rem:tautology}
For a Case~A unit $1+2w$, applying $\eps$ to $(1+2w)(1+2w') = 1$ gives
$\eps(w) + \eps(w') + 2\eps(w)\eps(w') = 0$, whence $\eps(w) \in \{0,-1\}$ and,
after the normalization $\eps(u)=1$, $\eps(w) = 0$. This is \emph{tautologous}
with the normalization --- it is the statement $\eps(u) \in \Z^\times$ --- and
says nothing about $w$. We record it only to dispel the temptation to read it
as progress on (I).
\end{remark}

\begin{remark}\label{rem:bartholdi}
Case~A is genuinely hard: Bartholdi \cite[Prop.~3.1]{Bartholdi2023} constructs,
for every $n$, elements $u,u' \in \Z[\P]$ with $u'u \equiv 1 \pmod{n\Z[\P]}$.
Thus approximate units exist modulo every power of $2$, and no finite list of
necessary congruence conditions can close Case~A.
\end{remark}

\section{Base cases}\label{sec:base}

\subsection{Radius at most $3$}
The small-radius cases are not new: they are consequences of the Craven--Pappas
small-length theorems.

\begin{corollary}\label{cor:cp}
Every unit of $\Z[\P]$ whose support lies in $\Ball{3}$ is trivial.
\end{corollary}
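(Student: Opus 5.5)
The plan is to deduce the corollary from the Craven--Pappas small-length theorems by comparing word radius with $L$-length, and then to pass from field coefficients to $\Z$.

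\emph{Step 1: fix the quotient and compare lengths.} Craven--Pappas define $L$-length through a fixed surjection $\P\to D_\infty=\langle s,t\mid s^2=t^2=1\rangle$. The concrete choice is the one that sends the generators $a,b$ to reflections (or to the identity), so that each generator has length at most $1$ in $D_\infty$. The length of a unit is then the spread of the image of its support. Since the word length in $D_\infty$ is subadditive and each generator moves it by at most one step, any $g\in\Ball{r}$ has image of length at most $r$. This gives $\Ball{3}\subseteq\{L\le 3\}$, and hence $L(u)\le 3$ whenever $\supp(u)\subseteq\Ball{3}$. One has to check the normalization here, since their length may be measured as the number of distinct letters in a window, or as that number minus one. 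In either case the containment should be verified directly from the presentation of $\P$ and their definition.

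\emph{Step 2: pass from $\Z$ to $\Q$.} Let $u\in\Z[\P]^\times$ with inverse $u'$. The element $u$ is also a unit of $\Q[\P]$. By the length symmetry \cite[Thm.~4.10]{CravenPappas2013}, $L(u')=L(u)\le 3$. Theorems 10.4--10.6 and 11.2 of \cite{CravenPappas2013} settle units of $L$-length at most $3$ over any field. Applied over $K=\Q$, they show that $u=\kappa g$ with $\kappa\in\Q^\times$ and $g\in\P$. Since $u$ has integer coefficients, $\kappa\in\Z$. The same applies to $u'=\kappa^{-1}g^{-1}$, so $\kappa^{-1}\in\Z$ and $\kappa=\pm1$. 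Hence $u$ is trivial in $\Z[\P]$.

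\emph{Main obstacle.} The difficulty is bookkeeping rather than mathematics. We must confirm that the hypotheses of Theorems 10.4--11.2 are exactly ``$L$-length at most $3$'' in the same normalization as Step 1. We must also confirm that those theorems hold over an arbitrary field, including characteristic $0$, and not only over $\F_2$. If the theorems were stated only in characteristic $2$, a fallback is available. Reduce mod $2$ as in Proposition~\ref{prop:dichotomy}: the unit $\bar u$ has support in $\Ball{3}$ and so is trivial, which puts $u$ in Case~A. Then try to show $w=0$ by a finite determinant computation with \eqref{eq:det} over $\Ball{3}$. This would be much weaker, though, because the coefficient heights are unbounded, so I would rely on the field-general statements.
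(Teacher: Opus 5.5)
Your proposal is correct and follows the paper's proof. The paper takes the quotient $\P\to D_\infty$ with kernel $\langle a^2,b^2\rangle$, so $a,b$ go to the two reflections, and defines $L$-length as the maximum (not a spread) of the induced length over the support; this settles your normalization worry, makes $\Ball{3}\subseteq\{L\le 3\}$ immediate, and lets \cite[Thms.~10.4--10.6, 11.2]{CravenPappas2013} be applied over $K=\Q$ exactly as you do. Your appeal to length symmetry and your $\kappa=\pm1$ integrality argument spell out steps the paper leaves implicit, while the paper additionally notes that one-sided units reduce to two-sided ones by direct finiteness \cite{ElekSzabo2004}.
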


\begin{proof}
Craven--Pappas \cite[Thms.~10.4--10.6, 11.2]{CravenPappas2013} prove that over
any field $K$, every two-sided unit of $K[\P]$ of $L$-length at most $3$ is
trivial, where $L$-length is the maximum over the support of the induced length
in one fixed infinite-dihedral quotient, the one with kernel
$\langle a^2,b^2\rangle$. The word-ball $\Ball{3}$ in $\{a,b\}$ maps
into $\{L\le 3\}$, so taking $K = \Q$ shows every $\Q[\P]$-unit --- in
particular every $\Z[\P]$-unit --- supported in $\Ball{3}$ is trivial. (One-sided
units reduce to two-sided ones since $\P$ is sofic, hence $K[\P]$ is directly
finite \cite{ElekSzabo2004}.)
\end{proof}

We stress that $\{L \le 3\}$ is an infinite union of lattice cosets and is far
larger than $\Ball{3}$; Corollary~\ref{cor:cp} uses only a small part of
\cite{CravenPappas2013}.

\begin{remark}[worked example at radius $1$]\label{rem:b1}
The radius-$1$ case ($\Ball{1}\subset\Ball{3}$) is already covered by
Corollary~\ref{cor:cp}; we include it only as an independent computational
check, and it is a verification, not an independent theorem. There is nothing
to enumerate here --- the coefficients are unbounded --- so the computation is
symbolic: one solves by Gr\"obner bases the polynomial systems expressing
$\det(M_u) = \pm(\text{monomial})$ for $\supp(u)\subseteq\Ball{1}$, one system
per monomial class and sign ($27$ classes, both signs), with an independent
pair-decomposition run as a cross-check. No nontrivial solution occurs.

Two caveats. First, on the $\det=-1$ branch the surviving solutions are
singletons $\lambda g$ with $\lambda$ a root of unity --- trivial units scaled
by a unit of the coefficient ring. They are unrelated to the $21$-support
complex unit of \cite{GardamComplex2023}, and we draw no inference from them
beyond the absence of a rational point. Second, our own standing rule is that a
Gr\"obner verdict is quoted only after confirmation by a second computer
algebra system, and that gate is \emph{unmet} here: the \texttt{msolve} run
reports \textsc{positive-dimensional} on both branches
(\texttt{results/z\_unit\_msolve\_b1.log}), which neither contradicts nor
corroborates the verdict. The radius-$1$ statement rests on
Corollary~\ref{cor:cp}; the computation is recorded as an unconfirmed check.
\end{remark}

\subsection{The multi-characteristic method}
Beyond radius $3$ the following method organizes Case~A. Reducing a single
integral unit modulo several primes at once constrains its coefficients
simultaneously.

\begin{proposition}[mod-$6$ pinning schema]\label{prop:mod6}
Suppose every one-sided unit of $\F_3[\P]$ with support in $\Ball{r}$ is
trivial. Then every Case~A unit $u$ of $\Z[\P]$ with $\supp(u)\subseteq\Ball{r}$
and height at most $5$ is trivial.
\end{proposition}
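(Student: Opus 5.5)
The plan is to reduce a single Case~A unit modulo $2$ and modulo $3$ at once, and to use the height bound to turn the two congruences into an exact statement about the support. Let $u$ be a Case~A unit with $\supp(u)\subseteq\Ball{r}$ and height at most $5$. After the normalization of Section~\ref{sec:dichotomy} we have $\eps(u)=1$. By the Case~A hypothesis, $\bar u=\bar g$ for some $g\in\P$, so $u_g$ is odd and $u_k$ is even for every $k\neq g$.

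Next I reduce $u$ modulo $3$. Reduction $\Z[\P]\to\F_3[\P]$ is a ring homomorphism, so the image $u^{(3)}$ is a two-sided unit of $\F_3[\P]$, and in particular a one-sided unit. Its support is contained in $\supp(u)\subseteq\Ball{r}$, since reduction can only delete support points. The hypothesis therefore applies and gives $u^{(3)}=\pm h$ for some $h\in\P$. Hence $u_k\equiv 0\pmod 3$ for every $k\neq h$. One small point: $u$ itself is working in $\Ball{r}$, so no translation is needed. I deliberately do not pass to $g^{-1}u$, because that would move the support out of $\Ball{r}$.

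The key pinning step now follows. For any $k\notin\{g,h\}$, the coefficient $u_k$ is divisible by both $2$ and $3$, hence by $6$. Since $|u_k|\le 5$, this forces $u_k=0$. Therefore $\supp(u)\subseteq\{g,h\}$ has size at most $2$, and $\eps(u)=1\in\Z^\times$. Lemma~\ref{lem:twoterm}, the integral form of the two-term result, then shows that $u$ is trivial. (In fact $u=g$, because $\bar u=\bar g$.) If $g=h$, the support is a single point and the conclusion is immediate.

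I expect no step to be a serious obstacle. The only points needing care are bookkeeping:
\begin{itemize}
\item that reduction preserves both unit-ness and the support bound, so that the $\F_3$ hypothesis really applies;
\item that Lemma~\ref{lem:twoterm} is invoked in its $\Z$-form, which needs augmentation in $\Z^\times$ and is supplied by the normalization.
\end{itemize}
The bound ``height at most $5$'' is used exactly once, as $5<6=\operatorname{lcm}(2,3)$. This explains the name of the schema.
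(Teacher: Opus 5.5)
Your proof is correct and is essentially the paper's argument: divisibility by $2$ and by $3$ kills every coefficient off two points once the height is at most $5<6$, and Lemma~\ref{lem:twoterm} finishes. The one difference is in your favour: the paper writes $\bar u=\bar 1$, tacitly taking $g=1$, whereas you keep the pair $\{g,h\}$ and never translate by $g^{-1}$, which could move the support out of $\Ball{r}$ and so out of reach of the $\F_3$ hypothesis.
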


\begin{proof}
$\bar u \bmod 2 = \bar 1$ forces all coefficients off the identity to be even;
the hypothesis gives $\bar u \bmod 3 = \pm\bar h$ for some $h\in\P$, forcing all
coefficients off $\{1,h\}$ to be divisible by $3$, hence by $6$, hence zero when
the height is at most $5$. The survivor $u = a\cdot 1 + b\cdot h$ has support of
size at most $2$ and is trivial by Lemma~\ref{lem:twoterm}.
\end{proof}

\begin{remark}[the scope of the hypothesis]\label{rem:f3scope}
The hypothesis of Proposition~\ref{prop:mod6} is a real restriction, and it is
\emph{false} for large $r$. For $r\le3$ it holds, over every field, by
\cite[Thm.~11.2]{CravenPappas2013} exactly as in Corollary~\ref{cor:cp}; we have
in addition checked $r\le2$ over $\F_3$ directly, where the ball search returns
infeasible (a solver verdict, with no proof certificate; see
Section~\ref{sec:repro}). But Murray's nontrivial unit of $\F_3[\P]$
\cite{Murray2021} has, in our exact model, support of size $39$ and word radius
exactly $6$, so the hypothesis fails for every $r\ge6$. The cases $r=4,5$ are
open, and we do not know the least radius of a nontrivial $\F_3$-unit.
Proposition~\ref{prop:mod6} is therefore a schema whose hypothesis is verified
only in the range where Corollary~\ref{cor:cp} already applies.
\end{remark}

For $r \le 3$ the proposition is thus subsumed by Corollary~\ref{cor:cp}; we
state it as an illustration of the method, whose reach beyond height $5$
(adding further primes) requires triviality of support-$3$ units, which is
open. The method's relation to the existence-direction construction of
Bartholdi~\cite{Bartholdi2023} and to the complex units of
\cite{GardamComplex2023} --- where rational-integer divisibility is unavailable
--- is discussed in Section~\ref{sec:zeta8}.

\section{The frontier: radius 4}\label{sec:frontier}

Corollary~\ref{cor:cp} closes every radius up to $3$ over every field, so
nothing at radius $\le 3$ can distinguish $\Z$ from a field of positive
characteristic. Radius $4$ is different, and this is what makes it the frontier
of the integral problem.

We are careful about which frontier this is. Over $\F_2$ the ball of radius $4$
has already been enumerated by Dietrich, Lee, Nies and Vinyals
\cite[\S3.4]{DLNV2026}, and Bartholdi \cite{Bartholdi2023} has already run
searches with $\Ball{4}$ as the support set; radius $4$ over a field is
therefore not new ground, and our own enumeration below is a reproduction of
theirs. The claim of this section concerns $\Z$, where no ball-limited search
has been carried out.

\subsection{Field units appear at radius 4}
At radius $4$ the field-coefficient conjecture visibly fails. Over $\F_2$,
a conjugate of Gardam's unit is supported in $\Ball{4}$ --- the unit itself has
word-radius exactly $5$ --- and an exact enumeration, carried out with the
parity characterisation and a solver with native \textsc{xor} clauses rather
than via \eqref{eq:det}, finds precisely $36$ nontrivial units with both
$\supp(u)$ and $\supp(u^{-1})$ in $\Ball{4}$, each of support profile $(21,21)$.
All $36$ were re-verified solver-free. The enumeration is complete
\emph{assuming the solver's unsatisfiability verdict is correct}: the search for
a $37$th returns \textsc{unsat}, but no proof certificate was produced for that
call, so completeness is not certified (Section~\ref{sec:repro}).
This is an independent reproduction of the count of Dietrich,
Lee, Nies and Vinyals \cite[\S3.4]{DLNV2026}, with matching count and support
profile. Over $\C$, Gardam \cite{GardamComplex2023} exhibits nontrivial units
with coefficients in $\Z[\zceight]$ on a $21$-element support, likewise inside
$\Ball{4}$.

Consequently a triviality theorem for $\Z[\P]$ at radius $4$ would be the first
statement to separate $\Z$ from \emph{every} field: the field conjecture is
false in $\Ball{4}$ over $\F_2$ (and over $\C$), so any proof that $\Z[\P]$ has
no nontrivial unit supported in $\Ball{4}$ must use a property of $\Z$ not shared
by those fields. We take this to be the central open problem the present
framework isolates.

\subsection{Case B at radius 4, conditionally, is a finite per-pattern problem}
Through Proposition~\ref{prop:dichotomy}, a Case~B unit $u$ supported in
$\Ball{4}$ reduces modulo $2$ to a \emph{nontrivial} unit $\bar u$ of $\F_2[\P]$
with $\supp(\bar u)\subseteq\Ball{4}$. The relevant list is not the $36$ of the
two-sided count but the $52$ of the one-sided one
(Remark~\ref{rem:onesided}), since all we know about $\bar u^{-1}$ is that
Lemma~\ref{lem:box} confines its support to the box, which is strictly larger
than $\Ball{4}$. With the one-sided question settled, and its completeness
certified, the reduction is no longer an assumption.

Under that reduction, for each of the $52$ patterns the question ``does it
lift to an integral unit?'' is, by \eqref{eq:det} and the box lemma
(Lemma~\ref{lem:box}), a finite integer feasibility problem \emph{once the
height is bounded} --- the same joint $(r,H)$ obstruction discussed in
Section~\ref{sec:wall}. We record that the height bound can in fact be removed,
by a different route: fixing both supports to $\Ball4$ and the box and working
with $uu^{-1}=1$ rather than with the determinant makes the unknowns linear, and
the companion paper closes all $52$ patterns that way, with no bound on the
coefficients. The remark below concerns the determinant route only, and stands
as a negative result about it.

\begin{remark}[an honest negative result]\label{rem:pilot}
The natural cheap attack --- excluding a pattern by climbing the tower of
congruences $\det(M_u) \equiv \pm(\text{monomial}) \pmod{2^k}$ --- does not
close even the flagship pattern. For Gardam's pattern one has
$\det(M_u)\equiv 1 \pmod 2$ automatically, and a direct computation shows the
congruence remains satisfiable modulo $2^k$ for $k\le 3$; that is, the pattern
survives to dyadic depth at least $3$. The scope of that computation is
narrower than the phrasing suggests, and we state it exactly: the lift was
allowed support in $\Ball{5}$ --- a pool of $147$ group elements, not
$\Ball{4}$; the monomial position of the determinant was \emph{fixed} at
$(0,0,0)$ rather than searched over; and the witness returned has coefficient
height $31$. The artifact is
\texttt{results/pilot\_lift36\_mod8\_run2.log}. This is consistent with
Remark~\ref{rem:bartholdi}: no fixed finite $2$-adic precision detects
non-invertibility, so per-pattern exclusion is coupled to a height bound rather
than resolved by a finite congruence check.
\end{remark}

\begin{remark}[the one-sided edge]\label{rem:onesided}
The count above is two-sided (both $\supp(u)$ and $\supp(u^{-1})$ in
$\Ball{4}$), matching \cite[\S3.4]{DLNV2026}. The one-sided question ---
whether a unit with $\supp(u)\subseteq\Ball{4}$ but $\supp(u^{-1})$ only
confined to the box of Lemma~\ref{lem:box} can be nontrivial beyond the $36$ ---
is a strictly larger finite computation, and it has since been carried out: the
one-sided count at radius $4$ has exactly $52$ members, the $36$ two-sided ones
together with $16$ whose inverses are supported at radius exactly $5$. The
completeness half is a single unsatisfiability carrying a \textsc{drat} proof
verified by \texttt{drat-trim}. We refer to the companion paper for the count
and its certificate and use only the resulting list here.
\end{remark}

\section{The height wall}\label{sec:wall}

The dichotomy of Section~\ref{sec:dichotomy} isolates the two obstacles; this
section makes precise \emph{what a proof would have to supply}, separating the
part that is unconditional from the part that is genuinely open.

\subsection{Why reduction alone cannot succeed}\label{sec:zeta8}
We first discharge the promise made in Section~\ref{sec:base}, and explain why
no argument assembled purely out of reductions --- modulo $2$, modulo $3$, and
into $\C$ --- can prove Higman's conjecture for $\P$, however many primes are
added.

The obstruction is Gardam's complex construction \cite{GardamComplex2023}. The
coefficients of his nontrivial units of $\C[\P]$ all lie in $\Z[\zceight]$, so
those units are units of $\Z[\zceight][\P]$. But $\Z[\zceight]$ has reductions
to residue rings of characteristic $2$ and of characteristic $3$, and it embeds
in $\C$: with respect to precisely the three reductions we have been using, it
sits where $\Z$ sits. Any argument whose only inputs are ``$u$ reduces to a
unit modulo $2$'', ``$u$ reduces to a unit modulo $3$'' and ``$u$ is a unit of
$\C[\P]$'' therefore proves a statement that is false for $\Z[\zceight]$, and
so cannot prove Higman-for-$\P$. Reduction can only ever be the bookkeeping;
something proper to $\Z$ has to do the work.

In the language of \eqref{eq:det} the difference is visible in one line. Over a
coefficient ring $R$ the determinant of a unit lies in
$R[L]^\times = R^\times \times L$, so the criterion is exactly as strong as
$R^\times$ is small. For $R = \Z$ we have $R^\times = \{\pm1\}$ and
$\det(M_u)$ is pinned to $\pm$ a monomial; for $R = \Z[\zceight]$ the unit
group is infinite and the constraint is correspondingly weak. The same
asymmetry explains why the pinning schema of Proposition~\ref{prop:mod6} has no
analogue over $\Z[\zceight]$: the divisibility it exploits is divisibility by
rational integers, which is not available there. Our reading is that a proof,
if there is one, has to be located in a property of $\Z$ that $\Z[\zceight]$
lacks --- and \eqref{eq:det} says the poverty of $\Z^\times$ is the candidate
--- rather than in a further reduction.

\subsection{The two unbounded parameters}
A hypothetical nontrivial unit of $\Z[\P]$ carries two a priori unbounded
parameters: the word-radius $r$ of its support, and its height $H$. When both
are bounded, the problem is a finite computation.

\begin{proposition}[finitization]\label{prop:finite}
Fix $r,H \in \mathbb{N}$. There are only finitely many $u \in \Z[\P]$ with
$\supp(u)\subseteq\Ball{r}$ and height at most $H$, and for each the criterion
\eqref{eq:det} decides whether $u$ is a unit. Consequently there is an algorithm
that decides whether $\Z[\P]$ has a nontrivial unit of support-radius at most
$r$ and height at most $H$; in particular Higman's conjecture for $\P$ is
\emph{decidable} within any prescribed $(r,H)$; the proof establishes
decidability only, and settles no instance.
\end{proposition}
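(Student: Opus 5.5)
The argument has three parts: finiteness of the candidate set, effectivity of the unit test for each candidate, and decidability of triviality.

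\emph{Finiteness.} The ball $\Ball{r}$ in the two-generator word metric is finite, with $|\Ball{r}|\le 1+4\sum_{i=1}^{r}3^{i-1}$. It can be listed explicitly by enumerating reduced words of length at most $r$ in $a^{\pm1},b^{\pm1}$ and identifying equal elements. Equality in $\P$ is decidable: I would use the faithful affine representation of $\P$ as a crystallographic group, in which each element is a pair $(A,t)$ with $A$ a diagonal sign matrix and $t\in\Z^3$ (or $\tfrac12\Z^3$), and compare these normal forms. An element $u$ with $\supp(u)\subseteq\Ball{r}$ and height at most $H$ is a function $\Ball{r}\to\{-H,\dots,H\}$. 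There are therefore at most $(2H+1)^{|\Ball{r}|}$ such elements, and they can be listed effectively.

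\emph{Unit test.} For each candidate $u$, I compute $M_u\in M_4(\Z[L])$ explicitly. For each $g\in\supp(u)$ and each coset representative $c\in\{1,a,b,ab\}$, I write $gc=c'\ell$ with $c'$ a representative and $\ell\in L$. This again uses the affine normal form, and it expresses $\ell$ as a monomial in $x,y,z$, identifying $L$ with $\Z^3$ via these generators or via $t_1,t_2,t_3$. Then $\det(M_u)$ is an exact computation in the Laurent ring $\Z[t_1^{\pm},t_2^{\pm},t_3^{\pm}]$: one expands it as a signed sum over the $24$ permutations, with integer arithmetic. Whether the resulting Laurent polynomial equals $\pm$ a monomial is checked syntactically, since it must have exactly one nonzero term and that term must have coefficient $\pm1$. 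By \eqref{eq:det}, this test holds exactly when $u\in\Z[\P]^\times$. The implication ``$\det$ a unit $\Rightarrow M_u$ invertible'' uses the adjugate over the commutative ring $\Z[L]$. Pulling the inverse matrix back to an element of $\Z[\P]$ is already part of the equivalence cited in \eqref{eq:det}, so I will invoke it rather than reprove it.

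\emph{Triviality and conclusion.} Whether a unit $u$ is trivial is simply whether $|\supp(u)|=1$ with coefficient $\pm1$. The algorithm runs through the finite list and reports whether some candidate passes the determinant test yet is not of the form $\pm g$. It halts because the list is finite and each test is a finite exact computation. The step needing most care is the effective, correct computation of the coset decomposition $gc=c'\ell$, that is, having a faithful normal form for $\P$. I would handle it by fixing the standard Hantzsche--Wendt affine matrices for $a$ and $b$ and verifying the two defining relations, citing torsion-freeness and the identification with $\pi_1$ of the Hantzsche--Wendt manifold for faithfulness. Finally, I would note explicitly that the argument gives only decidability and yields no bound on the running time beyond $(2H+1)^{|\Ball{r}|}$ determinant evaluations, so it settles no instance.
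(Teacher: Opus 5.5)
Your proof is correct and is the paper's argument: the set of maps $\Ball{r}\to\{-H,\dots,H\}$ is finite, $\det(M_u)$ is computed exactly and tested against $\pm$ a monomial via \eqref{eq:det}, and triviality is read off the support. The only difference is that you spell out the effectivity the paper leaves implicit, namely a normal form for $\P$ from the affine model and the coset decomposition $gc=c'\ell$ that builds $M_u$; note that the faithfulness of that model comes from the standard identification of the presented group with the Hantzsche--Wendt group, not from torsion-freeness.
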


\begin{proof}
The number of maps $\Ball{r}\to\{-H,\dots,H\}$ is finite, so the candidate set
is finite. For each candidate, $\det(M_u)$ is a Laurent polynomial computed
exactly, and \eqref{eq:det} tests unit-hood by checking whether it is
$\pm$ a monomial. Triviality is read off the support.
\end{proof}

The content of the conjecture is therefore entirely in \emph{removing} the two
bounds. Here one must be careful: bounding one parameter does not bound the
other. The relevant structural input is the box lemma, a consequence of the
Craven--Pappas length symmetry \cite[Thms.~4.9--4.10]{CravenPappas2013}.

\begin{lemma}[box lemma]\label{lem:box}
For each of the three infinite-dihedral quotients $\pi_i\colon\P\to D_\infty$
(the coordinate projections of the affine model) let $\ell_i$ be the induced
length. If $\sigma\tau = 1$ then
$\max_{\supp\sigma}\ell_i = \max_{\supp\tau}\ell_i$ for each $i$. Applied to
$u\,u^{-1}=1$, this gives $\max_{\supp(u^{-1})}\ell_i = \max_{\supp(u)}\ell_i$;
hence if $\supp(u)\subseteq\Ball{4}$ then $\supp(u^{-1})$ lies in the explicit
set $\{g : \ell_i(g)\le M_i\}$, $M_i = \max_{\Ball{4}}\ell_i$, an effectively
computable box.

We state the lemma only in the specialisation $\sigma\tau=1$, which is all we
use. The apparently more general form with $\sigma\tau\in\Z[L]\setminus\{0\}$
is false: taking $\sigma=(ab)^{2n}$ and $\tau=1$ gives
$\sigma\tau\in\Z[L]\setminus\{0\}$ with $\max\ell_i(\supp\sigma)=4n$ and
$\max\ell_i(\supp\tau)=0$. The length symmetry of
\cite[Thm.~4.9]{CravenPappas2013} is stated relative to the kernel of the
infinite-dihedral quotient, which has rank $2$, not relative to $L$, which has
rank $3$.
\end{lemma}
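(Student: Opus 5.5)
The plan is to reduce the box lemma to the Craven--Pappas length symmetry \cite[Thms.~4.9--4.10]{CravenPappas2013}, applied once for each of the three infinite-dihedral quotients, and then to compute the resulting box.

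\emph{Step 1: the quotients.} I would write $\P$ in its affine model, with $a,b,ab$ acting as $\pm1$-diagonal matrices composed with half-translations. For each coordinate $i$ I would check two things. First, the $i$-th coordinate of the action defines a homomorphism $\pi_i\colon\P\to\mathrm{Isom}(\R)$. Second, its image is a copy of $D_\infty$, with kernel $N_i$ of rank $2$ generated by the two lattice generators that translate in the other coordinates. For $\pi_3$, for instance, $N_3=\langle a^2,b^2\rangle$, which is the quotient used in Corollary~\ref{cor:cp}. The remaining two quotients are carried to this one by the automorphism of $\P$ permuting $a$, $b$, $(ab)^{-1}$ cyclically, up to conjugation. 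So it suffices to treat a single quotient, provided we record that $\ell_i=\ell\circ\pi_i$, where $\ell$ is the word length on $D_\infty$ with respect to the two reflection generators.

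\emph{Step 2: the symmetry.} Craven--Pappas regard $K[\P]$, and by the same construction $\Z[\P]\subset\Q[\P]$, as a crossed product of $\Q[N_i]$ by $D_\infty$. They grade it by the $\ell$-length of the $D_\infty$-component. Their Theorem~4.9 says that if $\sigma\tau$ is a nonzero element of the degree-zero part $\Q[N_i]$, then the top $\ell$-degrees of $\sigma$ and $\tau$ coincide. Theorem~4.10 is the special case $\sigma\tau=1$.

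The argument behind this, and the step I expect to need the most care, is a leading-term analysis. Write $\sigma=\sum_{d}\sigma_d$ and $\tau=\sum_d\tau_d$ by $D_\infty$-cosets. Top-length cosets in $D_\infty$ come in two ``ends'', words ending in either reflection. The product of two maximal-length words of lengths $m$ and $n$ has length $m+n$ unless cancellation occurs at the junction. Cancellation down to degree $0$ forces the end-types to match, and a non-zero-divisor argument in the Laurent ring $\Q[N_i]$ then rules out total cancellation when $m\neq n$. Rather than redo this, I would cite the theorems and only verify that our $\pi_i$ and $N_i$ are the ones they fix, up to the automorphism of Step~1. I would also note why kernel rank $2$, not the rank-$3$ lattice $L$, is the right setting: this is exactly the caveat about the false general form stated in the lemma.

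\emph{Step 3: the box.} Apply the symmetry to $\sigma=u$ and $\tau=u^{-1}$, with $u^{-1}$ two-sided by direct finiteness \cite{ElekSzabo2004}. This gives $\max_{\supp(u^{-1})}\ell_i=\max_{\supp(u)}\ell_i\le M_i$. Each $M_i$ is computable: enumerate the finite set $\Ball{4}$ and compute $\ell_i$ of each $\pi_i(g)$ by reducing words in $D_\infty$. The set $\{g:\ell_i(g)\le M_i\ \forall i\}$ is finite. Indeed, the joint map $(\pi_1,\pi_2,\pi_3)$ is injective on $\P$, because it records the full affine action, and each factor ranges over a finite set. The set is therefore an explicit finite box, which we enumerate by listing triples of short elements of $D_\infty$ compatible with a single element of $\P$.
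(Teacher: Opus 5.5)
Your proposal is correct and takes the paper's route: the paper obtains the lemma by citing the Craven--Pappas length symmetry \cite[Thms.~4.9--4.10]{CravenPappas2013} for each coordinate quotient, applying it to $uu^{-1}=1$, and computing the $M_i$ and the box from $\Ball{4}$. Your transport of the $\pi_3$ case to $\pi_1,\pi_2$ via the automorphism $a\mapsto b\mapsto (ab)^{-1}\mapsto a$, and your finiteness argument via injectivity of $(\pi_1,\pi_2,\pi_3)$, are correct details the paper leaves implicit; you do still need to check the stated counterexample directly, which is one line: $(ab)^{2n}=z^n\in L$ has $\ell_3=4n$ because $\pi_3(ab)$ is the product of the two reflection generators.
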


\begin{remark}[what the box lemma does and does not give]\label{rem:onlyradius}
Lemma~\ref{lem:box} relates the \emph{support radius} of $u$ to the support
radius of $u^{-1}$; it is silent about height. In particular:
\begin{itemize}
\item bounding the radius $r$ alone does \emph{not} finitize: for a fixed
support, \eqref{eq:det} is a system of polynomial equations in the integer
coefficients that may have infinitely many integer solutions;
\item bounding the height $H$ alone does \emph{not} finitize: infinitely many
support positions remain.
\end{itemize}
Thus a proof of Higman-for-$\P$ requires a \emph{joint} bound on $r$ and $H$.
The box lemma is the tool that, \emph{once $r$ is controlled}, confines
$\supp(u^{-1})$ so that the determinant/adjugate computation is finite; it is
not a device converting one bound into the other.
\end{remark}

\begin{problem}\label{prob:joint}
Bound the support radius $r$ and the height $H$ of a nontrivial unit of
$\Z[\P]$ jointly in terms of the group structure.
\end{problem}

Problem~\ref{prob:joint} is, by the above, equivalent to the conjecture for
$\P$ (in the presence of Proposition~\ref{prop:finite}); we record it because it
names precisely the missing ingredient rather than the conjecture as a whole.

\subsection{A structural angle, and why the obvious one is vacuous}
We close with the one structural source of a height bound we are aware of,
together with an honest account of its current status. The group von Neumann
algebra $L(\P)$ carries its canonical faithful normal trace $\tau$, given on
$\Z[\P]$ by $\tau(v) = v_1$, and that trace is all we use. (We note that
$L(\P)$ is \emph{not} a factor: $\P$ is virtually $\Z^3$ and hence far from
ICC. Nothing below needs factoriality.) For a unit $u$, both $u^*u$ and its
inverse $(u^*u)^{-1} =
u^{-1}(u^{-1})^*$ are positive elements of $\Z[\P]$, with
$\tau(u^*u) = \sum_g u_g^2$ and $\tau((u^*u)^{-1}) = \sum_g (u^{-1})_g^2$.

It is tempting to seek a height bound from positivity of $u^*u$ alone. This is
vacuous: $\tau(u^*u) = \sum_g u_g^2$ is merely the identity coefficient of
$u^*u$, the relation $u^*u \ge 0$ holds for \emph{every} element (not only
units), and $\tau(u^*u)$ is unbounded at fixed support (e.g.\ $u = N\cdot1$ has
support $\{1\}$ and $\tau(u^*u) = N^2$). Any genuine bound must instead
use \emph{invertibility} --- that $u^*u$ \emph{and} $(u^*u)^{-1}$ are
simultaneously integral, positive, and (via Lemma~\ref{lem:box}) supported near
radius $r$ --- to constrain the spectrum of $u^*u$. Whether such a
spectral-rigidity bound exists is open; we know of no mechanism, and we do not
claim one.

We have no encouraging datum to offer here, and we withdraw one that might be
thought available. Bartholdi \cite[\S3]{Bartholdi2023} remarks that the
analogous system is typically overconstrained and that he was unable to find a
solution; that remark concerns the \emph{$\theta$-twisted} adjoint, a different
involution from the plain $*$ used above, and in any case a nontrivial
$\theta$-twisted solution modulo $4$ --- and modulo $8$ --- has since been
found by the present author (companion work in preparation; the witnesses and
their independent verification are listed in Section~\ref{sec:repro}). The
remark therefore cannot be cited as evidence that a unitary constraint of
either kind is restrictive. We leave Problem~\ref{prob:joint} open and gate any
further investment on the appearance of a concrete spectral mechanism.

\section{Reproducibility}\label{sec:repro}
\sloppy

Every computation quoted above is carried out by a script in the accompanying
supplement and leaves a log; paths below are relative to the supplement root.
We list what each artifact does and does not establish, since the results in
this paper differ considerably in how well they are certified.

\begin{itemize}
\item \textbf{The matrix model and \eqref{eq:det}.}
\texttt{src/zp\_matrix.py}, log \texttt{results/zp\_matrix\_selftest.log}.
Exact Laurent arithmetic over $\Z[L]$, the builder $u \mapsto M_u$, the
$4\times4$ determinant, and a self-test battery: coset and parity tables on
$\Ball{5}$, the homomorphism property on random pairs, $\det(M_{\pm g})$ for
$g \in \Ball{3}$, and Gardam's unit as a positive control. The model, not the
criterion, is what these tests validate; for the criterion see
Section~\ref{sec:prelim} and \cite{Passman2021, CravenPappas2013}.

\item \textbf{The radius-$4$ count over $\F_2$ (Section~\ref{sec:frontier}).}
\texttt{src/unit\_b4\_census.py}, log \texttt{results/unit\_b4\_census.log}.
Parity characterisation plus a solver with native \textsc{xor} clauses; the
$36$ units and their support profile are re-verified solver-free.
\emph{No proof certificate accompanies the terminating \textsc{unsat} call}, so
completeness is conditional on the solver.

\item \textbf{The one-sided edge (Remark~\ref{rem:onesided}).}
\texttt{src/onesided\_beyond\_b4.py}, log
\texttt{results/onesided\_beyond\_b4.log}. The monolithic instance was halted
after roughly $59$ CPU-hours with no verdict. This is recorded as open, not as
a negative result.

\item \textbf{The box (Lemma~\ref{lem:box}).}
\texttt{src/box\_lemma.py} and the corrected recomputation
\texttt{src/box\_recompute.py}, output
\texttt{results/box\_b4\_recomputed.json}. The script gates that the three
coordinate projections are homomorphisms, computes the $M_i$, and verifies that
the enumeration of the box has stabilised. It also checks length symmetry
numerically on the known $\Ball{4}$ unit pair. Where the two logs disagree, the
recomputation supersedes.

\item \textbf{The mod-$2^k$ pilot (Remark~\ref{rem:pilot}).}
\texttt{src/pilot\_lift36\_mod8.py}, log
\texttt{results/pilot\_lift36\_mod8\_run2.log}. Support pool $\Ball{5}$,
determinant position fixed at $(0,0,0)$, witness height $31$, verified
solver-free at the end of the run. The scope restrictions are essential to the
reading of the result and are repeated in Remark~\ref{rem:pilot}.

\item \textbf{Radius $1$ over $\Z$ (Remark~\ref{rem:b1}).}
\texttt{src/z\_unit\_groebner.py}, log \texttt{results/z\_unit\_groebner\_b1.log};
second-system attempt \texttt{src/z\_unit\_msolve.py}, log
\texttt{results/z\_unit\_msolve\_b1.log}. As noted in Remark~\ref{rem:b1}, the
second-system confirmation gate is unmet.

\item \textbf{Ball searches over $\F_3$ (Remark~\ref{rem:f3scope}).}
\texttt{src/unit\_onesided\_f3.py}, log \texttt{results/unit\_onesided\_f3.log}.
$\Ball{1}$ and $\Ball{2}$ return infeasible; these are constraint-solver
verdicts with no certificate. $\Ball{3}$ was not completed and was stopped once
\cite[Thm.~11.2]{CravenPappas2013} was found to subsume it.

\item \textbf{Murray's $\F_3$-unit (Remark~\ref{rem:f3scope}).}
\texttt{src/murray\_unit.py}, log \texttt{results/murray\_unit.log}. Transcribes
the unit into the exact model, re-proves invertibility one-sidedly via
$\det(M_u) \bmod 3$, and reports its support size and word-radius profile.

\item \textbf{The $\theta$-twisted solutions modulo $4$ and $8$
(Section~\ref{sec:wall}).} \texttt{src/escape\_lift\_climb2.py} with
independent checker \texttt{src/verify\_mod4\_solution\_standalone.py};
witnesses \texttt{results/bartholdi\_mod2pow2\_solution.txt} and
\texttt{results/bartholdi\_mod2pow3\_solution.txt}. These belong to companion
work and are cited here only to withdraw the claim discussed in
Section~\ref{sec:wall}.
\end{itemize}

\bibliographystyle{amsplain}
\bibliography{refs}

@article{Higman1940,
  author = {Higman, Graham},
  title = {The units of group-rings},
  journal = {Proc. London Math. Soc. (2)},
  volume = {46},
  year = {1940},
  pages = {231--248},
}

@article{Gardam2021,
  author = {Gardam, Giles},
  title = {A counterexample to the unit conjecture for group rings},
  journal = {Ann. of Math. (2)},
  volume = {194},
  number = {3},
  year = {2021},
  pages = {967--979},
  note = {arXiv:2102.11818},
}

@article{Murray2021,
  author = {Murray, Alan G.},
  title = {More counterexamples to the unit conjecture for group rings},
  journal = {arXiv preprint},
  year = {2021},
  note = {arXiv:2106.02147},
}

@article{GardamComplex2023,
  author = {Gardam, Giles},
  title = {Non-trivial units of complex group rings},
  journal = {arXiv preprint},
  year = {2023},
  note = {arXiv:2312.05240v2},
}

@article{Bartholdi2023,
  author = {Bartholdi, Laurent},
  title = {On {G}ardam's and {M}urray's units in group rings},
  journal = {Algebra Discrete Math.},
  volume = {35},
  number = {1},
  year = {2023},
  pages = {22--29},
  note = {arXiv:2212.11334},
}

@article{CravenPappas2013,
  author = {Craven, David A. and Pappas, Peter},
  title = {On the unit conjecture for supersoluble group algebras},
  journal = {J. Algebra},
  volume = {394},
  year = {2013},
  pages = {310--356},
  note = {preprint version arXiv:1010.1144, with different numbering},
}

@article{DHJ2015,
  author = {Dykema, Ken and Heister, Timo and Juschenko, Kate},
  title = {Finitely presented groups related to {K}aplansky's direct finiteness
           conjecture},
  journal = {Exp. Math.},
  volume = {24},
  number = {3},
  year = {2015},
  pages = {326--338},
}

@article{ElekSzabo2004,
  author = {Elek, G{\'a}bor and Szab{\'o}, Endre},
  title = {Sofic groups and direct finiteness},
  journal = {J. Algebra},
  volume = {280},
  number = {2},
  year = {2004},
  pages = {426--434},
}

@article{DLNV2026,
  author = {Dietrich, Heiko and Lee, Melissa and Nies, Andr{\'e} and Vinyals, Marc},
  title = {On the trivial units property and the unique product property},
  journal = {arXiv preprint},
  year = {2026},
  note = {arXiv:2603.22640},
}

@article{Passman2021,
  author = {Passman, Donald S.},
  title = {On the counterexamples to the unit conjecture for group rings},
  journal = {arXiv preprint},
  year = {2021},
  note = {arXiv:2108.06570v2},
}

@article{Promislow1988,
  author = {Promislow, S. David},
  title = {A simple example of a torsion-free, nonunique product group},
  journal = {Bull. London Math. Soc.},
  volume = {20},
  number = {4},
  year = {1988},
  pages = {302--304},
  doi = {10.1112/blms/20.4.302},
}

\end{document}